\documentclass[12pt,reqno]{amsart}

\usepackage[latin1]{inputenc}
\usepackage{amsmath}
\usepackage{amsfonts}
\usepackage{amssymb}
\usepackage{graphics}
\usepackage{enumerate}
\usepackage{amssymb,amsmath,amsthm,amscd,latexsym,verbatim,graphicx,amsfonts}

\topmargin -0.5in
\textheight 9.0in
\oddsidemargin -0.0in
\evensidemargin -0.0in
\textwidth 6.5in

\usepackage{amscd}
\usepackage{amsmath}
\usepackage{amssymb}
\usepackage{amsthm}
\usepackage{latexsym}
\usepackage{verbatim}


\theoremstyle{plain}
\newtheorem{theorem}{Theorem}[section]

\newtheorem{lemma}[theorem]{Lemma}

\theoremstyle{definition}

\theoremstyle{remark}
\newtheorem{rem}[theorem]{Remark}

\newcommand{\nri}{n\rightarrow\infty}
\newcommand{\kri}{k\rightarrow\infty}
\newcommand{\tri}{t\rightarrow\infty}

\newcommand{\bbC}{\mathbb{C}}

\newcommand{\bbD}{\mathbb{D}}

\newcommand{\bbN}{\mathbb{N}}

\newcommand{\mca}{\mathcal{A}}
\newcommand{\mcj}{\mathcal{J}}

\newcommand{\mcl}{\mathcal{L}}
\newcommand{\mcm}{\mathcal{M}}
\newcommand{\mcr}{\mathcal{R}}

\newcommand{\eitheta}{e^{i\theta}}

\DeclareMathOperator*{\supp}{supp}

\title[]{Hyponormal Toeplitz Operators on Weighted Bergman Spaces}
\author[]{Trieu Le $\&$ Brian Simanek}
\date{}

\begin{document}
\maketitle

\begin{abstract}
We consider operators acting on a Hilbert space that can be written as the sum of a shift and a diagonal operator and determine when the operator is hyponormal.  The condition is presented in terms of the norm of an explicit block Jacobi matrix.  We apply this result to the Toeplitz operator with symbol ${z^n+c|z|^s}$ acting on certain weighted Bergman spaces and determine for what values of the constant $c$ this operator is hyponormal.  
\end{abstract}

\vspace{4mm}

\footnotesize\noindent\textbf{Keywords:} Hyponormal operator, Toeplitz Operator, Weighted Bergman Space, Block Jacobi Matrix

\vspace{2mm}

\noindent\textbf{Mathematics Subject Classification:} Primary 47B20; Secondary 47B15, 47B35, 47B38

\vspace{2mm}

\normalsize

\section{Introduction}\label{Intro}

A bounded operator $T$ acting on a Hilbert space is said to be \textit{hyponormal} if $[T^*,T]\geq0$, where $T^*$ denotes the adjoint of $T$.  One motivation for studying such operators comes from Putnam's inequality (see \cite[Theorem 1]{Putnam}), which says that hyponormal operators satisfy
\[
\|[T^*,T]\|\leq\frac{|\sigma(T)|_2}{\pi}
\]
where $\sigma(T)$ is the spectrum of $T$ and $|\cdot|_2$ denotes the two-dimensional area.

We will be interested in operators that can be written as a shift plus a diagonal operator.  Specifically, we will call an operator $M$ acting on a Hilbert space $H$ a \textit{weighted shift of multiplicity $n$} if there exists an orthonormal basis $\{e_k\}_{k=0}^{\infty}$ for $H$ and a sequence $\{a_k\}_{k=0}^{\infty}$ of complex numbers such that $Me_k=a_ke_{k+n}$.  Similarly, we will call an operator $M$ acting on a Hilbert space $H$ a \textit{diagonal operator} if there exists an orthonormal basis $\{e_k\}_{k=0}^{\infty}$ for $H$ and a sequence $\{a_k\}_{k=0}^{\infty}$ of complex numbers such that $Me_k=a_ke_{k}$.

Determining if a particular operator is hyponormal can be a challenging problem.  There are several classification results in the literature; one of the more famous being a 1988 result of Cowen that characterizes all hyponormal Toeplitz operators of a certain form acting on the Hardy space $H^2$ (see \cite{Cowen,Cowen2}).  The goal of this paper is to characterize all hyponormal Toeplitz operators of a certain form acting on weighted Bergman spaces of the unit disk.  The characterization will be given in terms of an explicit block Jacobi matrix, which we now discuss.

Block Jacobi matrices are matrices of the form
\[
\mcm=\begin{pmatrix}
B_1 & A_1 & 0 & \cdots & \cdots\\
A_1^* & B_2 & A_2 & 0 & \cdots\\
0 & A_2^* & B_3 & A_3 & \ddots\\
\vdots & \vdots & \ddots & \ddots & \ddots
\end{pmatrix}
\]
where each $A_m$ and $B_m$ is a $k\times k$ matrix for some fixed $k\in\bbN$ with $B_m=B_m^*$ and $\det(A_m)\neq0$ for all $m\in\bbN$.  An extensive introduction to the theory and applications of these operators can be found in \cite{DPS}.
In the context of our problem, a block Jacobi matrix is a symmetric operator that is densely defined on $\ell^2(\bbN_0)$ (where $\bbN_0=\bbN\cup\{0\}$) and its spectrum is a compact subset of the real line if the operator is bounded.  While the spectrum of such an operator is in general difficult to compute, one can verify that if $B_m\equiv0$ and $A_m\equiv I_{k\times k}$ for all $m\in\bbN$, then the spectrum of the corresponding block Jacobi matrix is $[-2,2]$.  In particular, the norm of this operator is $2$.

In the next section, we will state and prove our main result, which is a complete description of the complex numbers $c$ for which the operator $T+cD$ is hyponormal, where $T$ is a hyponormal bounded weighted shift and $D$ is a bounded diagonal operator (with respect to the same basis).    In Section \ref{app} we will discuss applications of the main result to Toeplitz operators of a particular form acting on weighted Bergman spaces of the unit disk.  We will see that our results allow us to significantly generalize \cite[Theorem 2]{SimHypo}.



\section{Main Result}\label{Main}

\begin{theorem}\label{better1}
Assume there is a basis $\{e_k\}_{k=0}^{\infty}$ of $H$ so that $T:H\rightarrow H$ is a hyponormal bounded weighted shift of multiplicity $n$ such that $\ker([T^*,T])=\{0\}$ and $D:H\rightarrow H$ is a bounded diagonal operator.  If $c\in\bbC$, then the operator $T+cD$ is hyponormal if and only if $\|cJ\|\leq1$, where
\[
J=[T^*,T]^{-1/2}([T^*,D]+[D^*,T])[T^*,T]^{-1/2}.
\]
\end{theorem}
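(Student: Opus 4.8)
The plan is to write the self-commutator of $T+cD$ explicitly, reduce the positivity requirement to a quadratic-form inequality on the finitely supported vectors, and then exploit the weighted-shift structure of $[T^*,D]$ to remove the argument of $c$ and recognize what remains as a norm bound on $J$.

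First I would record that, since $D$ is diagonal it is normal, so $[D^*,D]=0$ and
\[
[(T+cD)^*,\,T+cD]\;=\;[T^*,T]+\bar c\,[T^*,D]+c\,[D^*,T].
\]
Write $G:=[T^*,T]^{1/2}$ and $E:=[T^*,D]=T^*D-DT^*$, so that $[D^*,T]=E^*$. By hyponormality of $T$ together with $\ker[T^*,T]=\{0\}$, the operator $G$ is diagonal, positive and injective in the basis $\{e_k\}$ (although $G^{-1}$ may be unbounded), and since $T$ shifts the basis by $n$ while $D$ is diagonal, $E$ has the matrix of a weighted backward shift of multiplicity $n$. Thus $T+cD$ is hyponormal exactly when $G^2+\bar c E+cE^*\ge 0$. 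Testing this on the dense set $\mathcal{D}$ of finitely supported vectors and substituting $y=Gx$ (which is a bijection of $\mathcal{D}$ onto itself), hyponormality becomes the requirement that
\[
\|y\|^2+2\,\Real\!\big(\bar c\,\langle \widetilde E\,y,y\rangle\big)\ \ge\ 0 \qquad(y\in\mathcal{D}),
\]
where $\widetilde E:=G^{-1}EG^{-1}$ is again a weighted backward shift of multiplicity $n$ on $\mathcal{D}$ and $J=\widetilde E+\widetilde E^{*}$.

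Next comes the decisive step. A weighted shift is unitarily equivalent to every unimodular scalar multiple of itself, through an explicit diagonal unitary $U$ (with $U\mathcal{D}=\mathcal{D}$) satisfying $U\widetilde E U^{*}=\mu\widetilde E$. Applying such a $U$ and choosing $\mu=\overline{c/|c|}$ (the case $c=0$ being trivial) turns the displayed inequality into $\|y\|^2+|c|\,\langle Jy,y\rangle\ge0$ for all $y\in\mathcal{D}$. Taking $\mu=-1$ shows moreover that $-J$ is unitarily equivalent to $J$, so the quadratic form of $J$ on $\mathcal{D}$ is bounded below iff it is bounded above, in which case $J$ extends to a bounded self-adjoint operator whose form has infimum $-\|J\|$ and supremum $\|J\|$. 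Hence $\|y\|^2+|c|\,\langle Jy,y\rangle\ge0$ on $\mathcal{D}$ is equivalent to $|c|\,\|J\|\le1$, that is, to $\|cJ\|\le1$; and if $J$ is unbounded one produces directly a vector $y=\alpha e_j+\beta e_{j-n}$ violating the inequality, consistently with $\|cJ\|=\infty$. Combining these equivalences yields the theorem.

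I expect the main obstacle to be conceptual rather than computational: seeing that the asymmetry between $\bar c$ and $c$ in the self-commutator is illusory because $[T^*,D]$ is a weighted shift (so a diagonal unitary absorbs $\arg c$), and then that the relevant object $J$ has spectrum symmetric about the origin, which is precisely what converts a one-sided estimate into the two-sided norm bound $\|cJ\|\le1$. The only remaining care is bookkeeping with the possibly unbounded $G^{-1}$ and $\widetilde E$, which I would handle uniformly by phrasing everything through quadratic forms on the finitely supported vectors $\mathcal{D}$ and using that the self-commutator $[(T+cD)^*,T+cD]$ is itself bounded.
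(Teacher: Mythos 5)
Your argument is correct and follows essentially the same route as the paper: expand the self-commutator using normality of $D$, conjugate by $[T^*,T]^{-1/2}$, and exploit the rotational symmetry of a weighted shift to eliminate $\arg c$ and turn the one-sided positivity condition into the two-sided bound $\|cJ\|\leq 1$ --- where the paper cites Shields' result on the numerical range of a weighted shift, you prove the needed circular symmetry directly with diagonal unitaries, and your quadratic-form bookkeeping on finitely supported vectors is a reasonable way to handle a possibly unbounded $[T^*,T]^{-1/2}$. Two immaterial slips: the expansion should read $[T^*,T]+c\,[T^*,D]+\bar c\,[D^*,T]$ (you swapped $c$ and $\bar c$), and the unimodular factor should be $\mu=c/|c|$ so that $\bar c\,\mu=|c|$; neither affects the conclusion, since the final criterion depends only on $|c|$.
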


\begin{proof}
Since $D^*D=DD^*$, we have
\[
[T^*+\bar{c}D^*,T+cD]=[T^*,T]+c[T^*,D]+\bar{c}[D^*,T].
\]
It follows that $V:=T+cD$ is hyponormal if and only if
\[
[T^*,T]+c[T^*,D]+\bar{c}[D^*,T]\geq0,
\]
which is equivalent to
\begin{equation}\label{Ibig}
I\geq \frac{1}{2}\left(cA^*+\bar{c}A\right),
\end{equation}
where $A=2[T^*,T]^{-1/2}[T,D^*][T^*,T]^{-1/2}$.  Note that $A$ is a weighted shift of multiplicity $n$ as well.  Notice that condition \eqref{Ibig} can be restated as
\[
\mbox{Re}[\langle\bar{c}Af,f\rangle]\leq1
\]
for all $f\in H$ with $\|f\|=1$.  In other words, equation \eqref{Ibig} is equivalent to the condition that $\{\mbox{Re}[\bar{c}\lambda]:\lambda\in W(A)\}\subseteq(-\infty,1]$.  Since $A$ is a weighted shift and $J=\mbox{Re}[-A]$, it follows from \cite[Proposition 16]{Shields} that
\[
\{\lambda:|\lambda|<\|J\|\}\subseteq W(A)\subseteq\{\lambda:|\lambda|\leq\|J\|\}
\]
and hence condition \eqref{Ibig} is satisfied if and only if $\|cJ\|\leq1$.
\end{proof}

\begin{rem}
The matrix $J$ from Theorem \ref{better1} is a block Jacobi matrix when expressed using the basis $\{e_k\}_{k=0}^{\infty}$.
\end{rem}

\section{Applications}\label{app}

\subsection{Weighted Bergman Spaces}\label{wberg}
Let $\mu$ be a probability measure on the interval $[0,1]$ with $1\in\supp(\mu)$ and $\mu(\{1\})=0$.  Using $\mu$, define the measure $\nu$ on the open unit disk $\bbD$ by $d\nu(r\eitheta)=d\mu(r)\times\frac{d\theta}{2\pi}$.  Let $\mca^2_{\nu}(\bbD)$ denote the weighted Bergman space of the unit disk defined by
\[
\mca^2_{\nu}(\bbD)=\left\{f:\int_{\bbD}|f(z)|^2d\nu(z)<\infty,\, f\mathrm{\, is\, analytic\, in\, }\bbD\right\}.
\]
We equip $\mca^2_{\nu}(\bbD)$ with the inner product
\[
\left\langle f,g\right\rangle_{\nu}=\int_{\bbD}f(z)\overline{g(z)}\,d\nu(z).
\]
Notice that the rotation invariance of the measure $\nu$ means the monomials $\{z^n\}_{n=0}^{\infty}$ are an orthogonal set in $L^2(\bbD,d\nu)$ and $\mca_{\nu}^2(\bbD)$.  It is a standard fact that $\mca_{\nu}^2(\bbD)$ is a reproducing kernel Hilbert space.  Let us define the set $\{\gamma_t\}_{t\in[0,\infty)}$ by
\[
\gamma_t:=\int_{\bbD}|z|^td\nu(z)=\int_{[0,1]}x^td\mu(x).
\]
Since $1\in\supp(\mu)$, the sequence $\{\gamma_n\}_{n\in\bbN}$ decays subexponentially as $\nri$, meaning for all $t>0$ it holds that $\gamma_{m+t}/\gamma_m\rightarrow1$ as $m\rightarrow\infty$.  Since $\mu(\{1\})=0$, we know $\gamma_t$ approaches $0$ as $\tri$.  With this notation it is true that
\[
\mca^2_{\nu}(\bbD)=\left\{f(z)=\displaystyle\sum_{n=0}^{\infty}a_nz^n:\displaystyle\sum_{n=0}^{\infty}|a_n|^2\gamma_{2n}<\infty\right\}
\]
and the inner product becomes
\[
\left\langle\displaystyle\sum_{n=0}^{\infty}a_nz^n,\displaystyle\sum_{n=0}^{\infty}b_nz^n\right\rangle_{\nu}=\displaystyle\sum_{n=0}^{\infty}a_n\bar{b}_n\gamma_{2n}.
\]
Of particular interest is the case when
\[
d\nu(z)=(\beta+1)(1-|z|^2)^{\beta}dA
\]
for some $\beta\in(-1,\infty)$, where $dA$ is normalized area measure on $\bbD$ (see \cite{BKLSS,HwangLee,HLP,LuLiu,LuShi}).  Notice that when $\beta=0$, the space $\mca_\nu(\bbD)$ is just the usual Bergman space of the unit disk.

If $\varphi\in L^{\infty}(\bbD)$, then we define the operator $T_{\varphi}:\mca^2_{\nu}(\bbD)\rightarrow \mca^2_{\nu}(\bbD)$ with symbol $\varphi$ by
\[
T_{\varphi}(f)=P_{\nu}(\varphi f),
\]
where $P_{\nu}$ denotes the orthogonal projection to $\mca^2_{\nu}(\bbD)$ in $L^2(\bbD,d\nu)$.  There is an extensive literature aimed at characterizing those symbols $\varphi$ for which the corresponding operator $T_{\varphi}$ is hyponormal, much of which focuses on the special case of the classical Bergman space of the unit disk (see \cite{AC,CC,FL,Hwang,Hwang2,HwangLee,HLP,LuLiu,LuShi,Sadraoui,SimHypo}).  The specific symbol we will focus on is $\varphi(z)=z^n+cq(|z|)$, where $n\in\bbN$, $c\in\bbC$, and $q$ is a bounded function on $[0,1]$ that is sufficiently regular at $1$.  The case $n=1$ and $q(r)=r^2$ in the classical Bergman space was considered in \cite{FL} while a broader range of $n$ and $q(r)=r^s$ for general $s\in(0,\infty)$ was previously considered in \cite{SimHypo}, where it was shown that hyponormality of $T_{\varphi}$ acting on the classical Bergman space implies $|C|\leq\frac{n}{s}$ and the converse holds if $s\geq2n$.  Now we can complete that result by invoking Theorem \ref{better1} to obtain necessary and sufficient conditions on the constant $c$ for $T_{\varphi}$ acting on any $\mca_{\nu}^2(\bbD)$ to be hyponormal.  As a result, we will recover the aforementioned result from \cite{SimHypo}.

Now we will state our main application of Theorem \ref{better1} and to do so, we will let $\mcl$ and $\mcr$ denote the left and right shift operators respectively on $\ell^2(\bbN_0)$ and define the orthonormal basis $\{e_k\}_{k=0}^{\infty}$ for $\mca_{\nu}^2(\bbD)$ by
\[
e_k=\gamma_{2k}^{-1/2}z^k.
\]

\begin{theorem}\label{bounded}
Suppose in Theorem \ref{better1} we take $T=T_{z^n}$ and $D=T_{q(|z|)}$ acting on the weighted Bergman space $\mca_\nu^2(\bbD)$, where $q:[0,1]\rightarrow\bbC$ is in $L^{\infty}([0,1],\mu)$ and is continuously differentiable in a neighborhood of $1$.  If $\alpha=q'(1)/(2n)$, then the operator $J$ from Theorem \ref{better1} is a compact perturbation of $\alpha\mcl^n+\bar{\alpha}\mcr^n$ in the basis $\{e_k\}_{k=0}^{\infty}$.
\end{theorem}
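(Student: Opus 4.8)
The plan is to compute $T$, $D$ and $J$ explicitly in the basis $\{e_k\}$, reduce the assertion to a single scalar limit, and then establish that limit by a concentration argument. \emph{Matrix data.} Since $q(|z|)$ is radial, $T_{q(|z|)}$ is diagonal: $De_k=\beta_k e_k$ with $\beta_k=\gamma_{2k}^{-1}\int_{[0,1]}q(x)x^{2k}\,d\mu(x)$, so $|\beta_k|\le\|q\|_{L^\infty(\mu)}$. Likewise $T_{z^n}e_k=w_k e_{k+n}$ with $w_k=(\gamma_{2k+2n}/\gamma_{2k})^{1/2}\in(0,1]$, so $T$ is a bounded weighted shift of multiplicity $n$, and $[T^*,T]e_k=\delta_k e_k$ with $\delta_k=w_k^2-w_{k-n}^2>0$ for $k\ge n$ (and $\delta_k=w_k^2>0$ for $k<n$); positivity follows from strict log-convexity of $t\mapsto\gamma_t$, which holds because $1\in\supp(\mu)$ forces $\mu$ not to be a point mass. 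Thus $T$ is hyponormal with $\ker([T^*,T])=\{0\}$ and the hypotheses of Theorem~\ref{better1} are in force. A short computation then gives $([T^*,D]+[D^*,T])e_k=w_{k-n}(\beta_k-\beta_{k-n})e_{k-n}+w_k(\overline{\beta_{k+n}}-\overline{\beta_k})e_{k+n}$ (first term omitted when $k<n$), so $J$ is supported on the diagonals $\pm n$, with $J_{k-n,k}=w_{k-n}(\beta_k-\beta_{k-n})/\sqrt{\delta_k\delta_{k-n}}$ and $J_{k,k-n}=\overline{J_{k-n,k}}$.

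\emph{Reduction to one limit.} The operator $\alpha\mcl^n+\bar\alpha\mcr^n$ is also supported on the diagonals $\pm n$, and a bounded operator supported on finitely many diagonals is compact precisely when its entries tend to $0$: it is then a norm limit of its finite truncations, and conversely $e_k\rightharpoonup0$ forces $\langle Ce_k,e_{k\pm n}\rangle\to0$. Hence the theorem reduces to showing
\[
\lim_{k\to\infty}\frac{w_k(\beta_{k+n}-\beta_k)}{\sqrt{\delta_k\delta_{k+n}}}=\frac{q'(1)}{2n},
\]
the other diagonal and the $e_{k-n}$-entries being handled by conjugation and an index shift.

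\emph{Probabilistic reformulation.} Introduce the probability measures $d\eta_k=\gamma_{2k}^{-1}x^{2k}\,d\mu(x)$ on $[0,1]$; since $1\in\supp(\mu)$ and $\mu(\{1\})=0$, $\eta_k\to\delta_1$ weakly. Using $d\eta_{k+n}=w_k^{-2}x^{2n}\,d\eta_k$ and $w_k^2=\mathbb{E}_{\eta_k}[x^{2n}]$, one verifies the identities $\beta_{k+n}-\beta_k=w_k^{-2}\,\mathrm{Cov}_{\eta_k}(q,x^{2n})$, $\delta_{k+n}=w_k^{-2}\,\mathrm{Var}_{\eta_k}(x^{2n})$ and $\delta_k=w_{k-n}^{-2}\,\mathrm{Var}_{\eta_{k-n}}(x^{2n})$, whence
\[
\frac{w_k(\beta_{k+n}-\beta_k)}{\sqrt{\delta_k\delta_{k+n}}}=w_{k-n}\cdot\frac{\mathrm{Cov}_{\eta_k}(q,x^{2n})}{\mathrm{Var}_{\eta_k}(x^{2n})}\cdot\sqrt{\frac{\mathrm{Var}_{\eta_k}(x^{2n})}{\mathrm{Var}_{\eta_{k-n}}(x^{2n})}}.
\]
Here $w_{k-n}\to1$ by subexponential decay. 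The middle factor is the regression coefficient of $q$ on $x^{2n}$ under $\eta_k$: writing $q(x)=q(1)+q'(1)(x-1)+g(x)$ with $g$ bounded and $g(x)=o(|x-1|)$ near $1$, the constant drops out, the linear term contributes $q'(1)\,\mathrm{Cov}_{\eta_k}(x,x^{2n})$, and the $g$-term is negligible because the variance it contributes to the numerator is controlled by the square of the slope of $g$ on the bulk of $\eta_k$, which tends to $g'(1)=0$; together with $\mathrm{Var}_{\eta_k}(x^{2n})\sim(2n)^2\,\mathrm{Var}_{\eta_k}(x)$ (the same expansion applied to $x^{2n}$), this forces the middle factor to $q'(1)/2n$.

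\emph{Main obstacle.} What remains is to show the last factor tends to $1$, equivalently $\delta_{k+n}/\delta_k\to1$, and also to justify that $\eta_k([0,1-\delta])$ is negligible next to $\mathrm{Var}_{\eta_k}(x)$ when discarding the contributions away from $x=1$. I expect this to be the delicate part of the proof: $\mathrm{Var}_{\eta_k}(x)=\gamma_{2k}^{-2}(\gamma_{2k+2}\gamma_{2k}-\gamma_{2k+1}^2)$ can decay at wildly different rates for different $\mu$, and subexponential decay alone does not obviously make its ratios stabilise. The way through is to use that $\{\gamma_t\}$ is the moment sequence of a measure, so $L(t):=\log\gamma_t$ is a smooth convex function with $L'(t)\to0$; this allows one to express both $\delta_k$ and $\mathrm{Var}_{\eta_k}(x)$ as averages of $L''$ over intervals of bounded length centred at $t=2k$, and to deduce the required slow variation of those windowed averages. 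I would isolate this as a lemma about subexponential Hausdorff moment sequences and use it to close the argument.
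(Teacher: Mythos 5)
Your reduction is correct as far as it goes: the matrix entries of $T$, $D$, $[T^*,T]$ and $J$ are computed correctly, the compactness criterion for band operators is fine, and the covariance/variance identities $\beta_{k+n}-\beta_k=w_k^{-2}\mathrm{Cov}_{\eta_k}(q,x^{2n})$, $\delta_{k+n}=w_k^{-2}\mathrm{Var}_{\eta_k}(x^{2n})$ are exactly right, so the theorem does come down to the single limit you state. But the proof stops precisely where the real work begins, and the route you sketch for closing it does not suffice. Two analytic facts are needed and neither is established: (a) the contribution of $[0,1-\epsilon]$ to the covariances and variances is negligible relative to $\mathrm{Var}_{\eta_k}(x^{2n})$ (equivalently, a subexponential lower bound on $\gamma_{2k}\gamma_{2k+4n}-\gamma_{2k+2n}^2$), which is what legitimizes the Taylor-expansion treatment of the $g$-term and of $\mathrm{Var}_{\eta_k}(x^{2n})\sim(2n)^2\mathrm{Var}_{\eta_k}(x)$, since away from $1$ the symbol $q$ is merely $L^\infty$; and (b) $\delta_{k+n}/\delta_k\to1$. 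Your proposed lemma -- that $L=\log\gamma_t$ is smooth and convex with $L'\to0$, that $\delta_k$ is a windowed average of $L''$, and that such windowed averages vary slowly -- is not a proof and is false at the stated level of generality: for a generic smooth convex $L$ with $L'\to0$ (e.g.\ $L''$ a sparse sequence of bumps) the ratio of consecutive windowed averages of $L''$ need not tend to $1$. So the Hausdorff-moment structure must enter in a concrete way, and your sketch never says how.

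The device that closes both gaps is to rewrite the one-variable statistics as symmetric double integrals, e.g.
\[
\gamma_{2k}\gamma_{2k+4n}-\gamma_{2k+2n}^2=\tfrac12\int_{[0,1]^2}(xy)^{2k}(x^{2n}-y^{2n})^2\,d\mu(x)\,d\mu(y),
\]
and similarly $\mathrm{Cov}_{\eta_k}(q,x^{2n})$ with integrand $(xy)^{2k}(q(x)-q(y))(x^{2n}-y^{2n})$. Because $1\in\supp(\mu)$ and $\mu(\{1\})=0$, these double integrals decay subexponentially in $k$ (this is the paper's Lemma \ref{sube}), while the parts of the integrals over the complement of $[1-\epsilon,1]^2$ decay like $(1-\epsilon)^{2k}$; this gives (a). For (b), after localizing to $[1-\epsilon,1]^2$ the integrands for $\delta_{k+n}$ and $\delta_k$ differ only by the factor $(xy)^{2n}\in[(1-\epsilon)^{4n},1]$, which squeezes the ratio to $1$ as $\epsilon\to0$. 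With these two inputs your regression-coefficient argument (or, more directly, comparing $\frac{q(x)-q(y)}{x^{2n}-y^{2n}}$ with $\alpha$ on $[1-\epsilon,1]^2$, which avoids the intermediate comparison with $x$) becomes rigorous; without them the proposal has a genuine gap at its central step.
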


It is known that the operators $T$ and $D$ from Theorem \ref{bounded} satisfy the hypotheses of Theorem \ref{better1}.  Indeed, $T_f$ is bounded and hyponormal for any $f\in H^{\infty}(\bbD)$.  Furthermore, if $g\in\mbox{ker}([T_{z^n}^*,T_{z^n}])$, then
\[
\|T_{z^n}g\|^2=\int_{\bbD}|z^ng(z)|^2d\nu(z)=\int_{\bbD}|\bar{z}^ng(z)|^2d\nu(z)=\int_{\bbD}|P_{\nu}(\bar{z}^ng(z))|^2d\nu(z)=\|T_{z^n}^*g\|^2.
\]
This implies $\bar{z}^ng(z)\in\mca^2_{\nu}(\bbD)$, which implies $g=0$.  Our next lemma shows that $D$ from Theorem \ref{bounded} is a bounded diagonal operator with respect to $\{e_k\}_{k=0}^{\infty}$.  We can interpret this lemma as an adaptation of \cite[Lemma 1]{SimHypo}.

\begin{lemma}\label{bproj}
If $f\in L^{1}([0,1],\mu)$ and $k\in\bbN_0:=\bbN\cup\{0\}$, then
\[
P_{\nu}(f(|z|)z^k)=\frac{z^k}{\gamma_{2k}}\int_{[0,1]}f(x)x^{2k}d\mu(x).
\]
\end{lemma}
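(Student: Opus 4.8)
The plan is to reduce the projection formula to a statement about monomials and then use the rotation invariance of $\nu$. First I would observe that $\mca_\nu^2(\bbD)$ is a reproducing kernel Hilbert space whose orthonormal basis is $\{e_k\}_{k=0}^\infty$ with $e_k=\gamma_{2k}^{-1/2}z^k$, so for any $h\in L^2(\bbD,d\nu)$ we have $P_\nu(h)=\sum_{k=0}^\infty \langle h,e_k\rangle_\nu e_k$. Applying this with $h(z)=f(|z|)z^k$, the coefficient of $e_j$ is
\[
\langle f(|z|)z^k,e_j\rangle_\nu=\gamma_{2j}^{-1/2}\int_{\bbD}f(|z|)z^k\overline{z^j}\,d\nu(z).
\]
Writing $z=r\eitheta$ and using $d\nu(r\eitheta)=d\mu(r)\times\frac{d\theta}{2\pi}$, the angular integral $\int_0^{2\pi}e^{i(k-j)\theta}\frac{d\theta}{2\pi}$ vanishes unless $j=k$, so only the $j=k$ term survives and $P_\nu(f(|z|)z^k)=\langle f(|z|)z^k,e_k\rangle_\nu e_k$.

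Next I would compute that single surviving coefficient explicitly:
\[
\langle f(|z|)z^k,e_k\rangle_\nu e_k=\gamma_{2k}^{-1}\left(\int_{\bbD}f(|z|)|z|^{2k}\,d\nu(z)\right)z^k=\frac{z^k}{\gamma_{2k}}\int_{[0,1]}f(x)x^{2k}\,d\mu(x),
\]
where the last equality again uses the product structure of $\nu$ to integrate out the angular variable and the definition $\gamma_t=\int_{[0,1]}x^t\,d\mu(x)$. This gives the claimed formula.

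The one genuine technical point is integrability: the hypothesis is only $f\in L^1([0,1],\mu)$, so $f(|z|)z^k$ need not lie in $L^2(\bbD,d\nu)$ and the expansion $P_\nu(h)=\sum\langle h,e_k\rangle_\nu e_k$ does not literally apply. I would handle this by an approximation argument — truncate $f$ to $f_N=f\cdot\mathbf{1}_{\{|f|\le N\}}$, which is bounded hence in $L^2(\bbD,d\nu)$, apply the monomial computation to each $f_N$, and pass to the limit: the right-hand side converges by dominated convergence (the integrand is dominated by $|f(x)|x^{2k}\le|f(x)|\in L^1(\mu)$), and the left-hand side converges because $P_\nu$ acts on the dense set where everything is defined, with the limit identified by testing against the reproducing kernel at each point of $\bbD$. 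Alternatively, since $P_\nu$ is given by integration against the bounded reproducing kernel $K(z,w)$, one can simply define $P_\nu(f(|z|)z^k)(w)=\int_{\bbD}f(|z|)z^k\overline{K(z,w)}\,d\nu(z)$, which makes sense for $f\in L^1(\mu)$ because $K(\cdot,w)$ is bounded on $\bbD$ for each fixed $w$, and then the angular integration argument goes through verbatim. Everything else is a routine computation with the product measure.
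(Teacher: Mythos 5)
Your proof is correct and follows essentially the same route as the paper: both arguments come down to computing $\langle f(|z|)z^k, z^m\rangle_\nu$ for all $m$, observing via the product structure $d\nu(r\eitheta)=d\mu(r)\times\frac{d\theta}{2\pi}$ that only $m=k$ survives, and identifying the surviving coefficient as $\gamma_{2k}^{-1}\int_{[0,1]}f(x)x^{2k}\,d\mu(x)$. Your additional care about the case $f\in L^1\setminus L^2$ is a welcome refinement that the paper's one-line proof glosses over; the paper implicitly interprets $P_\nu$ through the inner-product identity against the dense set of polynomials, which is precisely the point your truncation (or reproducing-kernel) argument makes rigorous.
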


\begin{proof}
A calculation shows that
\[
\left\langle z^m,z^k\int_{[0,1]}f(x)x^{2k}d\mu(x)\right\rangle=\gamma_{2k}\left\langle z^m, f(|z|)z^k\right\rangle
\]
for every $m\in\bbN_0$, so the claim follows from the fact that polynomials are dense in $\mca_\nu^2(\bbD)$.
\end{proof}


We will also require the following elementary lemma.

\begin{lemma}\label{sube}
It holds that
\[
\lim_{\kri}\left(\int_{[0,1]^2}(xy)^{k}(x^{2n}-y^{2n})^2d\mu(x)d\mu(y)\right)^{1/k}=1
\]
\end{lemma}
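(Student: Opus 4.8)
The plan is to pass from the double integral to the power moments $\gamma_t=\int_{[0,1]}x^t\,d\mu(x)$ and then to squeeze the quantity in question between two expressions whose $k$-th roots both tend to $1$; only the elementary facts that $\mu$ is a probability measure with $1\in\supp(\mu)$ and $\mu(\{1\})=0$ are needed.

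First I would expand $(x^{2n}-y^{2n})^2=x^{4n}-2x^{2n}y^{2n}+y^{4n}$ and apply Fubini together with the $x\leftrightarrow y$ symmetry of the integrand to write
\[
I_k:=\int_{[0,1]^2}(xy)^k(x^{2n}-y^{2n})^2\,d\mu(x)\,d\mu(y)=2\bigl(\gamma_k\gamma_{k+4n}-\gamma_{k+2n}^2\bigr),
\]
which is automatically nonnegative by the Cauchy--Schwarz bound $\gamma_{k+2n}^2\le\gamma_k\gamma_{k+4n}$. The upper bound is then immediate: since $0\le(x^{2n}-y^{2n})^2\le1$ on $[0,1]^2$ we get $I_k\le\gamma_k^2\le1$, so $\limsup_{\kri}I_k^{1/k}\le1$.

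The content of the lemma is the matching lower bound $\liminf_{\kri}I_k^{1/k}\ge1$. The key point is that, because $1\in\supp(\mu)$ while $\mu(\{1\})=0$, for every $\delta\in(0,1)$ there exist two points $a<b$ of $\supp(\mu)$ with $1-\delta<a<b<1$: choose $a\in\supp(\mu)\cap(1-\delta,1)$, which is nonempty because $\mu\bigl((1-\delta,1)\bigr)=\mu\bigl((1-\delta,1]\bigr)>0$, and then choose $b\in\supp(\mu)\cap(a,1)$, which is nonempty because $\mu\bigl((a,1)\bigr)=\mu\bigl((a,1]\bigr)>0$. Fix $\eta>0$ smaller than $\tfrac12\min\{a-(1-\delta),\,b-a,\,1-b\}$, so that the intervals $I_a:=(a-\eta,a+\eta)$ and $I_b:=(b-\eta,b+\eta)$ are disjoint, contained in $(1-\delta,1)$, and every $x\in I_a$, $y\in I_b$ satisfies $x<y$. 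On the rectangle $R:=I_a\times I_b$ one then has $xy>(1-\delta)^2$ and $(x^{2n}-y^{2n})^2\ge\bigl((b-\eta)^{2n}-(a+\eta)^{2n}\bigr)^2=:c_0>0$, while $(\mu\times\mu)(R)=\mu(I_a)\,\mu(I_b)>0$ since $a,b\in\supp(\mu)$. Restricting the integral defining $I_k$ to $R$ gives $I_k\ge c_0\,(1-\delta)^{2k}\,(\mu\times\mu)(R)$, hence $\liminf_{\kri}I_k^{1/k}\ge(1-\delta)^2$. Letting $\delta\to0$ yields $\liminf_{\kri}I_k^{1/k}\ge1$, which together with the upper bound proves the lemma.

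The only genuine obstacle I anticipate is producing the two support points $a<b$ near $1$ --- the mechanism that makes $xy$ close to $1$ while keeping $x^{2n}-y^{2n}$ bounded away from $0$ on a set of positive $\mu\times\mu$-measure --- and the attendant choice of $\eta$; the closed form for $I_k$ in terms of $\gamma$'s is convenient but, as the argument shows, not actually needed, and neither is the subexponential decay of $\{\gamma_t\}$.
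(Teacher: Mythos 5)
Your proof is correct and follows essentially the same route as the paper: both arguments get the upper bound trivially and prove the lower bound by localizing near $1$ so that $(xy)^k\geq(1-\delta)^{2k}$, with the remaining factor bounded below by a positive constant independent of $k$. The only difference is that your two-support-point construction explicitly justifies the positivity of that constant (i.e., that $\mu$ near $1$ is not a point mass), a step the paper's proof uses implicitly when it asserts $\liminf_{\kri}\bigl(\int_{[1-\delta,1]^2}(x^{2n}-y^{2n})^2\,d\mu(x)\,d\mu(y)\bigr)^{1/k}=1$.
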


\begin{proof}
It is clear that
\[
\limsup_{\kri}\left(\int_{[0,1]^2}(xy)^{k}(x^{2n}-y^{2n})^2d\mu(x)d\mu(y)\right)^{1/k}\leq1
\]
Also notice that if $\delta\in(0,1)$ is fixed, then
\begin{align*}
&\liminf_{\kri}\left(\int_{[0,1]^2}(xy)^{k}(x^{2n}-y^{2n})^2d\mu(x)d\mu(y)\right)^{1/k}\\
&\qquad\qquad\qquad\geq\liminf_{\kri}\left(\int_{[1-\delta,1]^2}(xy)^{k}(x^{2n}-y^{2n})^2d\mu(x)d\mu(y)\right)^{1/k}\\
&\qquad\qquad\qquad\geq(1-\delta)^2\liminf_{\kri}\left(\int_{[1-\delta,1]^2}(x^{2n}-y^{2n})^2d\mu(x)d\mu(y)\right)^{1/k}\\
&\qquad\qquad\qquad=(1-\delta)^2
\end{align*}
Sending $\delta\rightarrow0$ proves the lemma.
\end{proof}

\begin{proof}[Proof of Theorem \ref{bounded}]
We will calculate the matrix $A$ from the proof of Theorem \ref{better1} and show that it is a compact perturbation of $-2\bar{\alpha}\mcr^{n}$.  Since $\mcj=\mbox{Re}[-A]$, this will prove the desired result.

For ease of notation, let $h=\bar{q}$ and define
\[
\hat{h}(m)=\int_{[0,1]}h(x)x^md\mu(x).
\]
Elementary calculations that apply Lemma \ref{bproj} reveal that
\[
[T^*_{z^n},T_{z^n}]e_k=\begin{cases}
\frac{\gamma_{2k+2n}}{\gamma_{2k}}e_k\qquad\qquad\qquad\qquad & k\in\{0,1,\ldots,n-1\},\\
\left(\frac{\gamma_{2k+2n}}{\gamma_{2k}}-\frac{\gamma_{2k}}{\gamma_{2k-2n}}\right)e_k & k\in\{n,n+1,\ldots\}
\end{cases}
\]
and
\[
[T_{z^n},T_{q(|z|)}^*]e_k=[T_{z^n},T_{h(|z|)}]e_k=\sqrt{\frac{\gamma_{2k+2n}}{\gamma_{2k}}}\left(\frac{\hat{h}(2k)}{\gamma_{2k}}-\frac{\hat{h}(2k+2n)}{\gamma_{2k+2n}}\right)e_{k+n}
\]
Thus, for $k\geq n$ it holds that $Ae_k=\lambda_ke_{k+n}$, where
\begin{align}\label{jmodk}
\nonumber\lambda_k&=2\frac{\sqrt{\frac{\gamma_{2k+2n}}{\gamma_{2k}}}\left(\frac{\hat{h}(2k)}{\gamma_{2k}}-\frac{\hat{h}(2k+2n)}{\gamma_{2k+2n}}\right)}{\sqrt{\frac{\gamma_{2k+4n}}{\gamma_{2k+2n}}-\frac{\gamma_{2k+2n}}{\gamma_{2k}}}\sqrt{\frac{\gamma_{2k+2n}}{\gamma_{2k}}-\frac{\gamma_{2k}}{\gamma_{2k-2n}}}}\\
&=(2+o(1))\frac{\gamma_{2k+2n}\hat{h}(2k)-\gamma_{2k}\hat{h}(2k+2n)}{\sqrt{(\gamma_{2k+4n}\gamma_{2k}-\gamma_{2k+2n}^2)(\gamma_{2k+2n}\gamma_{2k-2n}-\gamma_{2k}^2)}},
\end{align}
as $\kri$, where we used the fact that for any $\eta>0$ it holds that
\[
\lim_{t\rightarrow\infty}\frac{\gamma_{t+\eta}}{\gamma_t}=1.
\]
Now we write
\begin{align*}
\gamma_{2k+4n}\gamma_{2k}-\gamma_{2k+2n}^2&=\int_{[0,1]^2}(x^{2k+4n}y^{2k}-x^{2k+2n}y^{2k+2n})d\mu(x)d\mu(y)\\
&=\int_{[0,1]^2}(xy)^{2k}x^{2n}(x^{2n}-y^{2n})d\mu(x)d\mu(y).
\end{align*}
Interchanging the roles of $x$ and $y$ and adding these expressions, we find
\begin{equation*}
\gamma_{2k+4n}\gamma_{2k}-\gamma_{2k+2n}^2=\frac{1}{2}\int_{[0,1]^2}(xy)^{2k}(x^{2n}-y^{2n})^2d\mu(x)d\mu(y).
\end{equation*}
Using similar reasoning on the other expressions in \eqref{jmodk}, we can rewrite the leading term of \eqref{jmodk} as
\begin{align}\label{lead}
\frac{2\int_{[0,1]^2}(xy)^{2k}(h(x)-h(y))(y^{2n}-x^{2n})d\mu(x)d\mu(y)}{\sqrt{\int_{[0,1]^2}(xy)^{2k}(x^{2n}-y^{2n})^2d\mu(x)d\mu(y)\cdot\int_{[0,1]^2}(xy)^{2k-2n}(x^{2n}-y^{2n})^2d\mu(x)d\mu(y)}}.
\end{align}
Lemma \ref{sube} tells us that the denominator in \eqref{lead} decays subexponentially as $\kri$, so any exponentially decaying perturbation of the numerator will not affect the limit.  We conclude that for any $\epsilon\in(0,1)$ it holds that
\[
\lim_{\kri}\lambda_k=\lim_{\kri}\frac{2\int_{[1-\epsilon,1]^2}(xy)^{2k}(h(x)-h(y))(y^{2n}-x^{2n})d\mu(x)d\mu(y)}{\sqrt{\int_{[0,1]^2}(xy)^{2k}(x^{2n}-y^{2n})^2d\mu(x)d\mu(y)\cdot\int_{[0,1]^2}(xy)^{2k-2n}(x^{2n}-y^{2n})^2d\mu(x)d\mu(y)}}
\]

Observe that if $r$ is sufficiently close to $1$, then
\begin{equation}\label{q1}
\lim_{(x,y)\rightarrow(r,r)}\frac{q(x)-q(y)}{x^{2n}-y^{2n}}=\lim_{(x,y)\rightarrow(1,1)}\frac{(x-y)^{-1}\int_y^xq'(t)dt}{2n(x-y)^{-1}\int_y^xt^{2n-1}dt}=\frac{q'(r)}{2nr^{2n-1}},
\end{equation}
where we used the continuity of $q'$ in a neighborhood of $1$ in the last step.  
The continuity of the right-hand side of \eqref{q1} near $1$ tells us that if $\delta>0$ is fixed, then we can choose $\epsilon>0$ so that
\[
\left|\frac{h(x)-h(y)}{x^{2n}-y^{2n}}-\bar{\alpha}\right|<\delta,\qquad\qquad\qquad(x,y)\in[1-\epsilon,1]^2,
\]
where we set $\frac{h(x)-h(y)}{x^{2n}-y^{2n}}=\frac{\overline{q'(x)}}{2nx^{2n-1}}$ when $x=y$.  Then
\begin{align*}
&\int_{[1-\epsilon,1]^2}(xy)^{2k}(h(x)-h(y))(y^{2n}-x^{2n})d\mu(x)d\mu(y)\\
&\qquad\qquad\qquad=-\int_{[1-\epsilon,1]^2}(xy)^{2k}\frac{h(x)-h(y)}{x^{2n}-y^{2n}}(x^{2n}-y^{2n})^2d\mu(x)d\mu(y)\\
&\qquad\qquad\qquad=-\bar{\alpha}\int_{[1-\epsilon,1]^2}(xy)^{2k}(x^{2n}-y^{2n})^2d\mu(x)d\mu(y)+E_k,
\end{align*}
where
\[
|E_k|<\delta\int_{[1-\epsilon,1]^2}(xy)^{2k}(x^{2n}-y^{2n})^2d\mu(x)d\mu(y).
\]
Thus, to evaluate $\lim_{\kri}\lambda_k$, it suffices to evaluate
\[
\lim_{\kri}\frac{\int_{[1-\epsilon,1]^2}(xy)^{2k}(x^{2n}-y^{2n})^2d\mu(x)d\mu(y)}{\sqrt{\int_{[0,1]^2}(xy)^{2k}(x^{2n}-y^{2n})^2d\mu(x)d\mu(y)\cdot\int_{[0,1]^2}(xy)^{2k-2n}(x^{2n}-y^{2n})^2d\mu(x)d\mu(y)}}
\]
Invoking Lemma \ref{sube} as above allows us to replace this limit by (where $\epsilon'>0$ is arbitrary)
\begin{align*}
&\lim_{\kri}\frac{\int_{[0,1]^2}(xy)^{2k}(x^{2n}-y^{2n})^2d\mu(x)d\mu(y)}{\sqrt{\int_{[0,1]^2}(xy)^{2k}(x^{2n}-y^{2n})^2d\mu(x)d\mu(y)\cdot\int_{[0,1]^2}(xy)^{2k-2n}(x^{2n}-y^{2n})^2d\mu(x)d\mu(y)}}\\
&\qquad\qquad\qquad\qquad=\lim_{\kri}\sqrt{\frac{\int_{[0,1]^2}(xy)^{2k}(x^{2n}-y^{2n})^2d\mu(x)d\mu(y)}{\int_{[0,1]^2}(xy)^{2k-2n}(x^{2n}-y^{2n})^2d\mu(x)d\mu(y)}}\\
&\qquad\qquad\qquad\qquad=\lim_{\kri}\sqrt{\frac{\int_{[1-\epsilon',1]^2}(xy)^{2k}(x^{2n}-y^{2n})^2d\mu(x)d\mu(y)}{\int_{[1-\epsilon',1]^2}(xy)^{2k-2n}(x^{2n}-y^{2n})^2d\mu(x)d\mu(y)}}
\end{align*}
by Lemma \ref{sube}.  This last limit is in the interval $[(1-\epsilon')^{2n},1]$ and since $\epsilon'>0$ was arbitrary, the limit must be $1$.  Since we chose $\delta>0$ to be arbitrary, it follows that
\[
\lim_{\kri}\lambda_k=-2\bar{\alpha}
\]
as desired.
\end{proof}

As an example of Theorem \ref{bounded}, consider the case when $q(t)=t^s$ with $0<s<\infty$.  In this case, the matrix $J$ from Theorem \ref{better1} is a compact perturbation of $\frac{s}{2n}(\mcl^n+\mcr^n)$.  Thus the essential spectrum of $J$ is $[-\frac{s}{n},\frac{s}{n}]$ and so $\|J\|\geq\frac{s}{n}$, which generalizes part of \cite[Theorem 2]{SimHypo}.  If we specialize further to the case when $\mu=2rdr$, then we can recover the rest of \cite[Theorem 2]{SimHypo}.  Indeed, in this case the measure $\nu$ is normalized area measure on $\bbD$ and $\gamma_{t}=2(t+2)^{-1}$ so one can calculate that $J$ is given by
\begin{align*}
&J_{n+k,k}=J_{k,n+k}=\begin{cases}
\frac{s\sqrt{(k+n+1)(k+2n+1)}}{2(k+1+s/2)(k+n+1+s/2)}\qquad\qquad\qquad&  k=0,\ldots,n-1\\
\,\\
\frac{s(k+1)\sqrt{(k+n+1)(k+2n+1)}}{2n(k+1+s/2)(k+n+1+s/2)}& k\geq n.
\end{cases}
\end{align*}
One can verify by hand that if $s\geq2n$, then each non-zero entry of $J$ is less than $\frac{s}{2n}$ and the non-zero diagonals approach $\frac{s}{2n}$ as we move along them.  This implies that when $s\geq2n$ the spectrum of $J$ is precisely $[-\frac{s}{n},\frac{s}{n}]$ and so Theorem \ref{better1} implies \cite[Theorem 2]{SimHypo}.

Theorem \ref{bounded} also provides a certain monotonicity in $|q'(1)|$ of $\|J\|$ for the matrix $J$ associated to $T_{z^n+q(|z|)}$ by Theorem \ref{better1}.  More precisely, as $|q'(1)|\rightarrow\infty$ it holds that $\|J\|\rightarrow\infty$.  With this observation, one can reason as in the proof of Theorem \ref{bounded} to conclude that $T_{z^n+c\sqrt{1-|z|^2}}$ is hyponormal if and only if $c=0$.


\vspace{4mm}

\vspace{10mm}

\noindent Trieu Le, Department of Mathematics and Statistics, The University of Toledo, Toledo, OH 43606

\bigskip

\noindent Brian Simanek, Department of Mathematics, Baylor University, Waco, TX 76798

\end{document}